\newtheorem{thm}{Theorem}
\newtheorem{lem}[thm]{Lemma}
\theoremstyle{definition}
\newcommand{\ass}{\textrm{Ass}}
\newcommand{\spec}{\textrm{Spec}}
\newcommand{\kdim}{\textrm{kdim}}
\def\({\left(}
\def\){\right)}
\title[]{A directional uniformity of periodic
point distribution and mixing}
\author{Richard Miles and Thomas Ward}
\date{Draft - \today}
\address{School of Mathematics, University of East Anglia,
Norwich, NR4 7TJ, UK}
\email{r.miles@uea.ac.uk}
\address{School of Mathematics, University of East Anglia,
Norwich, NR4 7TJ, UK}
\email{t.ward@uea.ac.uk}
\subjclass[2000]{37C25, 37C40, 37C35}
\begin{document}

\begin{abstract}
For mixing~$\mathbb Z^d$-actions generated by commuting
automorphisms of a compact abelian group, we investigate the
directional uniformity of the rate of periodic point
distribution and mixing. When each of these automorphisms has
finite entropy, it is shown that directional mixing and
directional convergence of the uniform measure supported on
periodic points to Haar measure occurs at a uniform rate
independent of the direction.
\end{abstract}

\maketitle

\section{Introduction}

It is well-known that, under mild hypotheses, sufficiently
smooth functions mix at an exponential rate, and periodic point
measures become uniformly distributed on smooth functions at an
exponential rate, for dynamical systems with hyperbolic
behaviour or comparable regularity properties. For example,
Bowen~\cite[1.26]{MR0442989} shows an `exponential cluster
property', that Anosov diffeomorphisms preserving a smooth
measure mix Lipschitz functions exponentially fast, and
Lind~\cite{MR684244} shows similar properties for H{\"o}lder
functions on quasihyperbolic toral automorphisms. On compact
groups, smoothness conditions can be phrased in terms of how
well a function can be approximated by a function with finitely
supported Fourier transform (that is, by trigonometric
polynomials). Thus for a group automorphism~$\alpha:X\to X$ of
a compact metric abelian group~$X$, and an exhaustive
increasing sequence~$H_1\subset H_2\subset\cdots$ of finite
subsets of the character group~$\widehat{X}$, the rate of
mixing and the rate of uniform distribution of periodic points
amount to the existence of functions~$\phi$ and~$\psi$,
with~$\phi(k)\to\infty$ and~$\psi(k)\to\infty$ as~$k\to\infty$,
such that
\begin{enumerate}
\item $H_k\cap\widehat{\alpha}^kH_k=\{0\}$ for~$\vert n\vert>\phi(k)$
(a rate of mixing), and
\item $H_k\cap(\widehat{\alpha}^k-1)H_k=\{0\}$ for~$\vert n\vert>\psi(k)$
(a rate of equidistribution of periodic points).
\end{enumerate}
Statement~(i) gives a class of functions~$\mathcal C(X)$ with
prescribed decay of Fourier coefficients,
 a rate function~$\phi'=o(1)$, and~$C=C(f,g)$ for which
\begin{equation}\label{andwhenshegetstheresheknows}
f,g\in\mathcal C(X)\implies
\left\vert\int f(x)g(\alpha^nx){\rm d}\mu(x)
-\int f{\rm d}\mu\int g{\rm d}\mu\right\vert<C\phi'(n),
\end{equation}
where~$\mu$ denotes Haar measure on~$X$. Statement~(ii) gives a
rate function~$\psi'=o(1)$ and a constant~$C=C(f,\alpha)$ for
which
\begin{equation}\label{andshesbuyingastairwaytoheaven}
f\in\mathcal C(X)\implies\left\vert\int f{\rm d}\mu_n-\int f{\rm d}\mu\right\vert<C\psi'(n),
\end{equation}
where~$\mu_n$ denotes Haar measure on the subgroup of points
fixed by~$\alpha^n$. For a given group~$X$, the class of test
functions on which the mixing and uniform distribution may be
seen depends, via the exhaustive sequence, on the
functions~$\phi$ and~$\psi$. For explicit calculations of this
sort when~$X=\mathbb T^d$ is the torus, and~$\mathcal{C}(X)$ is
a class of H{\"o}lder functions, see
Lind~\cite[Sect.~4]{MR684244}.

Our interest here is in commuting automorphisms with finite
entropy, together defining an algebraic~$\mathbb
Z^d$-action~$\alpha$ (an \emph{entropy rank one action} in the
sense of Einsiedler and Lind~\cite{MR2031042}); this means
that~$\alpha$ is a homomorphism from~$\mathbb Z^d$ to the group
of continuous automorphisms of~$X$. We write~$\alpha^{\mathbf
n}$ for the automorphism~$\alpha(\mathbf n)$. Examples include
commuting toral automorphisms, Ledrappier's
example~\cite{MR512106}, the (invertible extension of
the)~$\times2,\times3$ system, and many others (Schmidt's
monograph~\cite{MR1345152} describes many dynamical properties
of these systems).

A non-uniform rate of mixing, or a non-uniform rate of
convergence of periodic point measures, for a~$\mathbb
Z^d$-action~$\alpha$, corresponds to the
statements~\eqref{andwhenshegetstheresheknows}
and~\eqref{andshesbuyingastairwaytoheaven} respectively for
each of the maps~$\alpha^{\mathbf n}$ with~$\mathbf n\neq0$.
The uniformity of the title amounts to asking if, having fixed
an appropriate exhaustion~$H_1\subset H_2\subset\cdots$ of the
character group~$\widehat{X}$, there is a uniform way to choose
the functions~$\phi$ and~$\psi$ witnessing a {directional
uniformity} in mixing and in the distribution of periodic
points. We show that the decay functions~$\phi'$ and~$\psi'$
can be chosen so that they depend only on the distance from the
origin in~$\mathbb Z^d$, by proving the following theorem.

\begin{thm}\label{main}
Let~$(X,\alpha)$ be a mixing entropy rank one~$\mathbb
Z^d$-action by automorphisms of a compact abelian group~$X$
satisfying the Descending Chain Condition on
closed~$\alpha$-invariant subgroups. Write~$\mu$ for Haar
measure on~$X$ and~$\mu_{\mathbf n}$ for Haar measure on the
subgroup of points fixed by the automorphism~$\alpha^{\mathbf
n}$. Then there is a class of smooth functions~$\mathcal C(X)$
strictly containing the trigonometric polynomials, and rate
functions~$\phi'=o(1)$, $\psi'=o(1)$, such that, for
any~$f,g\in\mathcal C(X)$,
\[
\left\vert\int f(x)g(\alpha^{\mathbf n}x){\rm d}\mu(x)
-\int f{\rm d}\mu\int g{\rm d}\mu\right\vert<C(f,g)\phi'(\Vert\mathbf n\Vert),
\]
and
\[
\left\vert\int f{\rm d}\mu_{\mathbf n}
-\int f{\rm d}\mu\right\vert<C(f)\psi'(\Vert\mathbf n\Vert).
\]
\end{thm}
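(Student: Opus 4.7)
The plan is to dualize the action via Pontryagin duality and work with the dual module $M=\widehat X$ over the Laurent polynomial ring $R_d=\mathbb Z[u_1^{\pm 1},\ldots,u_d^{\pm 1}]$, on which the dual action of $\alpha^{\mathbf n}$ is multiplication by the monomial $u^{\mathbf n}=u_1^{n_1}\cdots u_d^{n_d}$. The Descending Chain Condition makes $M$ Noetherian, so it admits a finite prime filtration $0=M_0\subset M_1\subset\cdots\subset M_r=M$ with $M_i/M_{i-1}\cong R_d/\mathfrak p_i$ for associated primes $\mathfrak p_i$. Standard entropy rank one theory (Einsiedler--Lind) forces each $R_d/\mathfrak p_i$ to have Krull dimension at most one, and mixing forbids any $u^{\mathbf n}-1$, $\mathbf n\neq 0$, from lying in any $\mathfrak p_i$. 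The first step is to reduce via this filtration to the case $M=R_d/\mathfrak p$.

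The key arithmetic input is that in this case the fraction field $K$ of $R_d/\mathfrak p$ is a global field (a number field, or a function field of a curve over a finite field), and $u^{\mathbf n}\in K^\times$ carries a Weil-type height $h$. Multiplicativity of the height and the absence of multiplicative relations among the $u_i$ (from mixing) yield a direction-uniform lower bound
\[
h(u^{\mathbf n})\ge c(\alpha)\Vert\mathbf n\Vert
\quad\text{for all }\mathbf n\in\mathbb Z^d\setminus\{0\},
\]
with an analogous bound for $u^{\mathbf n}-1$. Defining the exhaustion $H_k\subset M$ to consist of elements of height at most $k$ (finite by Northcott's theorem and its function-field counterpart), the gap property reads: whenever $m$ and $u^{\mathbf n}m$ both lie in $H_k$, one has $h(u^{\mathbf n})\le 2k+O(1)$, hence $\Vert\mathbf n\Vert\le\phi(k):=c(\alpha)^{-1}(2k+O(1))$, and analogously for the periodic point version.

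Now let $\mathcal C(X)$ consist of functions whose Fourier coefficients decay sufficiently rapidly in the height of the character, a class that strictly contains the trigonometric polynomials. The difference $\int f\cdot(g\circ\alpha^{\mathbf n})\,\mathrm d\mu-\int f\,\mathrm d\mu\int g\,\mathrm d\mu$ and the difference $\int f\,\mathrm d\mu_{\mathbf n}-\int f\,\mathrm d\mu$ expand as sums over nonzero characters; the gap property ensures these sums are effectively supported on characters of height exceeding some $k(\Vert\mathbf n\Vert)\to\infty$, and a routine Cauchy--Schwarz argument combined with the Fourier decay converts this into rate functions $\phi',\psi'=o(1)$ depending only on $\Vert\mathbf n\Vert$.

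The principal obstacle I anticipate lies in passing from the single-factor analysis back to the full module $M$: the height function naturally lives on each prime quotient $R_d/\mathfrak p_i$, but assembling these into a single action-compatible height on $M$, and combining the various constants and rate functions from the finitely many factors into a single uniform estimate, will require some care. A further subtlety is that zero and positive characteristic primes may both occur among the $\mathfrak p_i$, so the height machinery and Northcott-type finiteness results used in different factors must be reconciled to produce a single rate function valid across all of $M$.
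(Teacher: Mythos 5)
Your high-level plan coincides with the paper's: dualize to an $R_d$-module, reduce via a prime filtration of the Noetherian module to cyclic quotients $R_d/\mathfrak p$ of Krull dimension one, do arithmetic in the global field $\mathbb K(\mathfrak p)$, and then lift back. The difference is your choice of technical device: you work with a single Weil height $h$ on $\mathbb K(\mathfrak p)$ and the exhaustion $H_k=\{h\le k\}$, whereas the paper fixes the finite set $S(\mathfrak p)$ of unbounded places and imposes the two-sided constraint $\theta(k)^{-1}\le|a|_v\le\theta(k)$ at every $v\in S(\mathfrak p)$. By the product formula and the boundedness of $R_d/\mathfrak p$ outside $S(\mathfrak p)$ these exhaustions are comparable, so this is a genuine but equivalent reformulation, not a conceptual departure.

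Two substantive points. First, you assert that ``multiplicativity of the height and the absence of multiplicative relations among the $u_i$'' give the crucial direction-uniform bound $h(u^{\mathbf n})\ge c\Vert\mathbf n\Vert$. This is the heart of the matter and is not an immediate consequence of mixing. Writing $h(u^{\mathbf n})=\tfrac12\Vert\mathbf n\Vert\sum_{v\in S(\mathfrak p)}|\boldsymbol\ell_v\cdot\widehat{\mathbf n}|$, what is needed is that the Lyapunov vectors $\boldsymbol\ell_v$ span $\mathbb R^d$, so that the continuous function $\widehat{\mathbf n}\mapsto\sum_v|\boldsymbol\ell_v\cdot\widehat{\mathbf n}|$ is strictly positive on the compact sphere and hence bounded below. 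Mixing gives multiplicative independence of $\pi(u_1),\dots,\pi(u_d)$ over $\mathbb Z$, i.e.\ $\mathbb Q$-independence of their log-embeddings; to promote this to the needed $\mathbb R$-independence you must invoke the lattice structure of the $S$-unit group (Dirichlet's unit theorem and its function-field analogue). The paper proves this spanning statement as a separate lemma (Lemma~\ref{dont_you_change_on_me}), via the directional entropy function from~\cite{MR2350424}. Your sketch elides this step, and you should make it explicit — it is exactly where mixing is converted into directional uniformity.

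Second, and this is where you genuinely diverge: for the periodic-point property you argue $h(u^{\mathbf n}-1)\ge h(u^{\mathbf n})-O(1)$ and deduce the gap directly from the height inequality $h\bigl((u^{\mathbf n}-1)m/m\bigr)\le 2k$. The paper instead passes, via the product formula, to a secondary place $v$ where $|a|_v$ must be large and then appeals to Baker's theorem (archimedean) and Yu's theorem (non-archimedean) on linear forms in logarithms to bound $|\pi(u^{\mathbf n})-1|_v$ from below. Your route avoids the transcendence theory entirely and still yields $\psi_M(k)=O(\log\theta(k))$, matching Theorem~\ref{i_was_living_in_darkness}. If correct — and I do not see a flaw — this is a cleaner argument for the qualitative statement; the paper's Diophantine route has the virtue of producing constants that are in principle effective, which the authors explicitly note. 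Finally, your worry about assembling heights across factors with mixed characteristic is well-placed, and the resolution is the one you gesture at: one does not need a single global height on $M$, only a compatible family of exhaustions $H_k^M$ built from the $H_k^{R_d/\mathfrak p_i}$ as in Lemmas~\ref{i_love_you_just_the_way_you_are} and~\ref{girl_youre_my_lucky_star}.
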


In addition to the motivation already given, this
question arose as a result of our
paper~\cite{MR2350424}, where it is shown that for a large
class of such systems there is a uniform lower bound to the
exponential rate of growth in the number of periodic points,
and the papers~\cite{MR2308145} and~\cite{MR2279271}, in which
more subtle directionally uniform bounds and counts for
periodic points are found.
Because of the diversity of underlying
compact groups, and our main interest in uniformity, we have not
delved into the articulation between the growth in the
exhaustive sequence (measuring the smoothness of the
function class) and the permitted growth in the
control functions~$\phi$ and~$\psi$ (measuring the
rate of mixing or of equidistribution of periodic points).

Our methods combine the formalism introduced by Kitchens and
Schmidt in~\cite{MR1036904}, Diophantine arguments, and
Einsiedler and Lind's adelic Lyapunov vectors for entropy rank
one actions~\cite{MR2031042}.

\section{Algebraic $\mathbb{Z}^d$-actions}

Following Kitchens and Schmidt, we exploit the correspondence
between an algebraic~$\mathbb Z^d$-action~$\alpha$ by
automorphisms of a compact abelian metric group~$X$ and a
module over the ring of Laurent
polynomials~$R_d=\mathbb{Z}[u_1^{\pm 1},\dots,u_d^{\pm 1}]$.
This is achieved by identifying each dual
automorphism~$\widehat{\alpha}^{\mathbf{n}}$ with
multiplication by $u^{\mathbf{n}}=u_1^{n_1}\cdots u_d^{n_d}$,
then extending this in a natural way to polynomials. As a
result, attention may be restricted to a fixed~$R_d$-module~$M$
with dynamical properties of~$\alpha$ translated into algebraic
properties of~$M$. For example, the Descending Chain Condition
on closed~$\alpha$-invariant subgroups of~$X$ corresponds
to~$M$ being Noetherian. The mixing property for~$\alpha$
translates to multiplication by $u^{k\mathbf{n}}$ being
injective on $M$ for all $k\in\mathbb{N}$ and all
$\mathbf{n}\neq 0$. These two properties will be assumed
throughout. A full introduction to algebraic
$\mathbb{Z}^d$-actions and the correspondence just described is
given in Schmidt's monograph~\cite{MR1345152}.

Some especially useful algebraic machinery is available
when~$\alpha$ has entropy rank one (that is, when each element
of the action has finite topological entropy). Since~$M$ is
assumed to be Noetherian, it has a finite set of associated
prime ideals~$\ass(M)\subset\spec(R_d)$. Furthermore,
since~$\alpha$ is mixing and of entropy rank one, for
each~$\mathfrak{p}\in\ass(M)$, the module~$R_d/\mathfrak{p}$
has Krull dimension one, written~$\kdim(R_d/\mathfrak{p})=1$
(see~\cite[Prop.~6.1]{MR2031042}
and~\cite[Lem.~2.3]{MR2308145}). Therefore, the field of
fractions of~$R_d/\mathfrak{p}$ is a global field that we
denote by~$\mathbb{K}(\mathfrak{p})$.
Let~$\mathcal{P}(\mathbb{K}(\mathfrak{p}))$ denote the set of
places of~$\mathbb{K}(\mathfrak{p})$, $|\cdot|_v$ the absolute
value corresponding to the place~$v$, and set
\[
S(\mathfrak{p})=\{v\in\mathcal{P}(\mathbb{K}(\mathfrak{p}))\mid\,
|R_d/\mathfrak{p}|_v\mbox{ is an unbounded subset of }\mathbb{R}\},
\]
which is a finite set because~$R_d/\mathfrak{p}$ is finitely
generated. Following Einsiedler and Lind, associate
to~$\mathfrak{p}$ the list of \emph{Lyapunov vectors}
\[
\mathcal{L}(\mathfrak{p})=\{\boldsymbol\ell_v=(\log|\pi(u_1)|_v,\dots,\log|\pi(u_d)|_v)\mid v\in S(\mathfrak{p})\},
\]
where $\pi:R_d\rightarrow R_d/\mathfrak{p}$ denotes the usual
quotient map.

In what follows, our approach is to prove an algebraic version
of Theorem~\ref{main} for a module of the
form~$R_d/\mathfrak{p}$, and then build up to a general
Noetherian module~$M$ using standard methods
(see~\cite{MR1248915} for example).

\section{Two uniformities}

Let~$M$ be an~$R_d$-module, and suppose
that~$\(H^M_k\)_{k\geqslant 1}$ is an increasing sequence of
finite subsets of~$M$ with~$\bigcup_{k=1}^{\infty}H^M_k=M$
(that is, an exhaustive sequence). Let~$\phi_M$ and~$\psi_M$ be
functions (to be chosen later)
with~$\phi_M(k),\psi_M(k)\rightarrow\infty$
as~$k\rightarrow\infty$. We are interested in the following two
properties of~$M$.
\begin{enumerate}
\item[I:] $H^M_k\cap\mathbf u^{\mathbf{n}}H^M_k=\{0\}$ for
    all~$\mathbf{n}\in\mathbb{Z}^d$
    with~$\Vert\mathbf{n}\Vert>\phi_M(k)$ (directional uniformity of mixing),
\item[II:] $H^M_k\cap(\mathbf u^{\mathbf{n}}-1)H^M_k=\{0\}$
    for all~$\mathbf{n}\in\mathbb{Z}^d$
    with~$\Vert\mathbf{n}\Vert>\psi_M(k)$ (directional uniformity of distribution of periodic points).
\end{enumerate}

\begin{thm}\label{girl_youre_my_sunshine}
Let~$\mathfrak{p}\subset R_d$ be a prime ideal
with~$\kdim(R_d/\mathfrak{p})=1$, and suppose
that~$\theta(k)\nearrow\infty$ as~$k\rightarrow\infty$. If the
module~$M=R_d/\mathfrak{p}$ corresponds to a mixing action,
then there exists an exhaustive increasing
sequence~$\(H^M_k\)_{k\geqslant 1}$ of finite subsets of~$M$,
and a constant~$B>0$, such that Property~{\rm I} is satisfied
for
\[
\phi_M(k)=B\log \theta(k).
\]
\end{thm}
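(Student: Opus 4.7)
\emph{Sketch.} Since $\mathfrak{p}$ is prime, $M=R_d/\mathfrak{p}$ is an integral domain sitting inside the global field $\mathbb{K}(\mathfrak{p})$, and the plan is to use the logarithmic Weil height on $\mathbb{K}(\mathfrak{p})$, which I denote by $\mathrm{h}$, to construct the required exhaustive sequence. Set
\[
H^M_k=\{m\in M:\mathrm{h}(m)\leqslant \log\theta(k)\}.
\]
Northcott's theorem, applied inside the fixed global field $\mathbb{K}(\mathfrak{p})$, ensures each $H^M_k$ is finite, and since $\theta(k)\nearrow\infty$ and every element of $M$ has some finite height, the sequence is increasing and exhausts $M$.

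Suppose $m_1,m_2\in H^M_k$ are nonzero with $m_1=u^{\mathbf{n}}m_2$ in $M$. Forming the ratio inside $\mathbb{K}(\mathfrak{p})$ and using subadditivity of $\mathrm{h}$ under multiplication gives
\[
\mathrm{h}(u^{\mathbf{n}})=\mathrm{h}(m_1/m_2)\leqslant \mathrm{h}(m_1)+\mathrm{h}(m_2)\leqslant 2\log\theta(k).
\]
The theorem then reduces to establishing a linear lower bound $\mathrm{h}(u^{\mathbf{n}})\geqslant c\|\mathbf{n}\|$ with some $c=c(\mathfrak{p})>0$, after which one takes $B=2/c$.

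For the lower bound, observe that $\pi(u^{\mathbf{n}})$ is a unit in $R_d/\mathfrak{p}$ (since $u^{\mathbf{n}}$ is a unit in $R_d$), so $|\pi(u^{\mathbf{n}})|_v=1$ for every $v\notin S(\mathfrak{p})$. Combining this with the product formula collapses the height to a positive linear combination
\[
\mathrm{h}(u^{\mathbf{n}})=\sum_{v\in S(\mathfrak{p})}c_v\bigl|\langle\mathbf{n},\boldsymbol\ell_v\rangle\bigr|,
\]
where the weights $c_v>0$ are half the appropriate local degrees. This is visibly a seminorm in $\mathbf{n}\in\mathbb{R}^d$, and mixing upgrades it to a norm: if a nonzero integer vector $\mathbf{n}$ lay in its kernel, then $|\pi(u^{\mathbf{n}})|_v=1$ at every place of $\mathbb{K}(\mathfrak{p})$, so Kronecker's theorem (or its function-field counterpart) would force $\pi(u^{\mathbf{n}})$ to be a root of unity, giving $\pi(u^{k\mathbf{n}})=1$ for some $k\geqslant 1$ and contradicting mixing. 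Equivalence of norms on $\mathbb{R}^d$ then yields the required inequality.

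I expect the last step to be the principal obstacle: linking the mixing hypothesis to non-degeneracy of the Lyapunov seminorm in a way that works uniformly in characteristic zero and positive characteristic, and confirming the reduction of the Weil height to a sum supported on $S(\mathfrak{p})$. Once those facts are in place, the remainder is a routine assembly of standard properties of the height.
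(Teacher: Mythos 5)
Your framing is close in spirit to the paper's: both proofs work with the finite place set $S(\mathfrak{p})$, both use the product formula to pass to the Lyapunov vectors $\boldsymbol\ell_v$, and both hinge on showing that a Lyapunov-type quantity grows linearly in $\Vert\mathbf{n}\Vert$. Your choice of exhaustion by Weil height instead of the paper's box
\[
H^M_k=\{a\in M:\theta(k)^{-1}\leqslant|a|_v\leqslant\theta(k)\text{ for all }v\in S(\mathfrak{p})\}\cup\{0\}
\]
is a harmless cosmetic variant (Northcott and the integrality of $M$ outside $S(\mathfrak{p})$ make the two essentially interchangeable), and the identity $\mathrm{h}(\pi(\mathbf{u}^{\mathbf{n}}))=\tfrac12\sum_{v\in S(\mathfrak{p})}|\mathbf{n}\cdot\boldsymbol\ell_v|$ via the product formula is correct.

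The genuine gap is exactly where you flagged unease: upgrading the Lyapunov seminorm to a norm. Your Kronecker argument shows only that no nonzero \emph{integer} vector lies in the kernel of $\mathbf{x}\mapsto\sum_v|\mathbf{x}\cdot\boldsymbol\ell_v|$. That is strictly weaker than the kernel being $\{0\}$, which is what you need for equivalence of norms on $\mathbb{R}^d$ to give $\mathrm{h}(\pi(\mathbf{u}^{\mathbf{n}}))\geqslant c\Vert\mathbf{n}\Vert$. If the Lyapunov vectors spanned a proper subspace $V\subsetneq\mathbb{R}^d$ whose orthogonal complement is an irrational subspace (hence meets $\mathbb{Z}^d$ only at $0$), Kronecker raises no objection, yet integer vectors $\mathbf{n}$ approaching $V^\perp$ would make $\mathrm{h}(\pi(\mathbf{u}^{\mathbf{n}}))/\Vert\mathbf{n}\Vert\to0$ and destroy the required uniform bound. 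What rules this out in the paper is Lemma~\ref{dont_you_change_on_me}: mixing forces the Lyapunov vectors to span all of $\mathbb{R}^d$. But the proof of that lemma is not elementary --- it invokes the directional entropy function and a lower bound, coming from the earlier paper on uniform periodic point growth, asserting that $h$ is bounded away from zero on the unit sphere when the action is mixing. That input cannot be replaced by Kronecker's theorem, which sees only rational directions. So your sketch reduces the theorem correctly to the spanning statement, but does not prove it; supplying Lemma~\ref{dont_you_change_on_me} (or the directional entropy bound behind it) is the essential missing step.
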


The proof of Theorem~\ref{girl_youre_my_sunshine} is
facilitated by the following result, which is adapted
from~\cite{MR2350424}.

\begin{lem}\label{dont_you_change_on_me}
If the prime ideal~$\mathfrak{p}\subset R_d$
has~$\kdim(R_d/\mathfrak{p})=1$, and the
module~$R_d/\mathfrak{p}$ corresponds to a mixing
algebraic~$\mathbb{Z}^d$-action, then the set of Lyapunov
vectors~$\mathcal{L}(\mathfrak{p})$ spans~$\mathbb{R}^d$.
\end{lem}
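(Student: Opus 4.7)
The plan is to argue by contradiction, using the $S$-unit structure of the global field $\mathbb{K}(\mathfrak{p})$ to convert a real linear dependence into an integer one. Write
\[
\lambda_i=\bigl(\log|\pi(u_i)|_v\bigr)_{v\in S(\mathfrak{p})}\in\mathbb{R}^{S(\mathfrak{p})},
\]
so that the $\lambda_i$ are the columns of the $|S(\mathfrak{p})|\times d$ matrix whose rows are the Lyapunov vectors $\boldsymbol\ell_v$. Spanning of $\mathbb{R}^d$ by the rows is equivalent to linear independence over $\mathbb{R}$ of the columns, so suppose the $\lambda_i$ satisfy a nontrivial real relation.

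The first substantive step is to note that every $\pi(u_i)$ is an $S(\mathfrak{p})$-unit in $\mathbb{K}(\mathfrak{p})$. Indeed, for $v\notin S(\mathfrak{p})$ the set $|R_d/\mathfrak{p}|_v$ is bounded, yet it contains all powers $|\pi(u_i)|_v^k$ with $k\in\mathbb{Z}$, which forces $|\pi(u_i)|_v=1$. Hence each $\lambda_i$ lies in the logarithmic image of the $S(\mathfrak{p})$-unit group, which by Dirichlet's $S$-unit theorem is a lattice of rank $|S(\mathfrak{p})|-1$ in $\mathbb{R}^{S(\mathfrak{p})}$. Vectors in a lattice are $\mathbb{R}$-linearly dependent if and only if they are $\mathbb{Z}$-linearly dependent: choose a $\mathbb{Z}$-basis and invoke the standard fact that a real relation among integer vectors can be rescaled to an integer relation. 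Thus the assumed real dependence yields $\mathbf{n}\in\mathbb{Z}^d\setminus\{0\}$ with $\sum_i n_i\lambda_i=0$, that is, $|\pi(u^{\mathbf{n}})|_v=1$ at every $v\in S(\mathfrak{p})$.

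Combined with $|\pi(u^{\mathbf{n}})|_v=1$ for $v\notin S(\mathfrak{p})$ (from the previous paragraph), this makes $\pi(u^{\mathbf{n}})$ have absolute value one at every place of $\mathbb{K}(\mathfrak{p})$. Kronecker's theorem in the number field case, and the fact that in a function field over $\mathbb{F}_q$ the elements with trivial valuation everywhere are precisely those of $\mathbb{F}_q^{\times}$, together force $\pi(u^{\mathbf{n}})$ to be a root of unity. Consequently $\pi(u^{k\mathbf{n}})=1$ for some $k\ge 1$, so $u^{k\mathbf{n}}-1\in\mathfrak{p}$ and multiplication by this element is zero on $R_d/\mathfrak{p}$, contradicting mixing. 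The main obstacle is the $\mathbb{R}$-to-$\mathbb{Z}$ conversion, where the discreteness supplied by the $S$-unit theorem is essential; a secondary point of care is the uniform treatment of number fields and positive characteristic function fields over finite fields at the Kronecker step.
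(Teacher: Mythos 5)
Your argument is correct, and it is genuinely different from the paper's. The paper's proof is short but outsources the work: it uses the formula for directional entropy in terms of Lyapunov vectors, $h(\mathbf{w})=\sum_{v}\max\{\boldsymbol\ell_v\cdot\mathbf{w},0\}$, and cites a result from a prior paper (\cite[Th.~1.1]{MR2350424}) that $h$ is bounded away from zero for a mixing entropy-rank-one action; if the Lyapunov vectors failed to span, a unit vector $\mathbf{w}$ orthogonal to all of them would give $h(\mathbf{w})=0$, a contradiction. Your proof is instead purely algebraic/number-theoretic and self-contained: you observe that the $\pi(u_i)$ are $S(\mathfrak{p})$-units (using that $|R_d/\mathfrak{p}|_v$ is bounded off $S(\mathfrak{p})$ and that $u_i$ is a unit of $R_d$), pass to the column vectors $\lambda_i$ in the $S$-unit log lattice, use the rank-over-$\mathbb{R}$-equals-rank-over-$\mathbb{Q}$ fact to upgrade an $\mathbb{R}$-dependence to an integer one, and then apply Kronecker's theorem (resp.\ the constant-field fact in characteristic $p$) to conclude $\pi(u^{\mathbf{n}})$ is a root of unity, so $u^{k\mathbf{n}}-1\in\mathfrak{p}$ for some $k$, killing mixing. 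What the paper's route buys is brevity and a conceptual link to directional entropy, at the cost of leaning on an external result; what your route buys is a proof from first principles in the module, with the only external input being Dirichlet's $S$-unit theorem (which is also needed to know the log image is discrete, not merely finitely generated) and Kronecker. One small stylistic caveat: the phrasing ``a real relation among integer vectors can be rescaled to an integer relation'' should be read as an existence statement (rank over $\mathbb{R}$ equals rank over $\mathbb{Q}$), not as a rescaling of the given real coefficients; the underlying fact you invoke is the correct one.
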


\begin{proof}
This can be seen using properties of the directional entropy
function~$h:\mathbb{R}^d\rightarrow \mathbb{R}$
(see~\cite{MR1355295} and~\cite{MR1869066}). The proof
of~\cite[Th.~1.1]{MR2350424} shows that~$h$ is bounded away
from zero when the action is mixing.
If~$\mathcal{L}(\mathfrak{p})$ does not span~$\mathbb{R}^d$,
then there
exists~$\mathbf{w}\in\mathsf{S}_{d-1}=\{\mathbf{z}\in\mathbb{R}^d\mid\,\Vert\mathbf{z}\Vert=1\}$
such that
\[
h(\mathbf{w})=\sum_{v\in V}\max\{\boldsymbol\ell_v\cdot\mathbf{w},0\}=0,
\]
giving an immediate contradiction.
\end{proof}

\begin{proof}[Proof of Theorem~\ref{girl_youre_my_sunshine}]
Write~$\widehat{\mathbf n}=\mathbf n/\Vert \mathbf n\Vert$ for
any non-zero integer vector~$\mathbf{n}$. We claim that there
is a constant~$C>0$ such that given any non-zero
vector~$\mathbf{n}\in\mathbb{Z}^d$, there exists~$w\in
S(\mathfrak{p})$ such
that~$|\boldsymbol\ell_w\cdot\widehat{\mathbf{n}}|> C$. If this
were not the case, then compactness of~$\mathsf{S}_{d-1}$ would
give a point~$\mathbf{z}$ in~$\mathsf{S}_{d-1}$ such
that~$\sum_{v\in
S(\mathfrak{p})}|\boldsymbol\ell_v\cdot\mathbf{z}|=0$
(since~$\mathbf{w}\mapsto\sum_{v\in
S(\mathfrak{p})}|\boldsymbol\ell_v\cdot \mathbf{w}|$ is
continuous on~$\mathsf{S}_{d-1}$), meaning
that~$\mathcal{L}(\mathfrak{p})$ would lie in the subspace
orthogonal to~$\mathbf{z}$, contradicting
Lemma~\ref{dont_you_change_on_me}.

Set
\[
H^M_k=\{a\in M\mid\theta(k)^{-1}\leqslant|a|_v\leqslant\theta(k)\mbox{ for all }v\in S(\mathfrak{p})\}\cup\{0\}.
\]
and note that~$\(H^M_k\)_{k\geqslant 1}$ is an increasing
exhaustive sequence of finite subsets of~$M$
since~$\theta(k)\nearrow\infty$ as~$k\to\infty$.

Given~$\mathbf{n}\in\mathbb{Z}^d$
with~$\Vert\mathbf{n}\Vert>\phi_M(k)$, there
exists~$v\in S(\mathfrak{p})$ such
that $|\boldsymbol\ell_v\cdot\widehat{\mathbf{n}}|>C$.
Let~$a\in H^M_k$ be non-zero.
If~$\boldsymbol\ell_v\cdot\widehat{\mathbf{n}}<0$, then
\begin{eqnarray*}
|\pi(\mathbf u^{\mathbf{n}})a|_v
& = & \exp(\Vert\mathbf{n}\Vert
\boldsymbol\ell_v\cdot\widehat{\mathbf{n}})|a|_v\\
& < & \exp\left(\frac{-C\log\theta(k)}{C}\right)\theta(k)^{1/2}\\
& = &\theta(k)^{-1/2}.
\end{eqnarray*}
On the other hand, if~$\boldsymbol\ell_v\cdot\widehat{\mathbf{n}}>0$, then
\begin{eqnarray*}
|\pi(\mathbf u^{\mathbf{n}})a|_v
& = & \exp(\Vert\mathbf{n}\Vert
\boldsymbol\ell_v\cdot\widehat{\mathbf{n}})|a|_v\\
& > & \exp\left(\frac{C\log\theta(k)}{C}\right)\theta(k)^{-1/2}\\
& = & \theta(k)^{1/2}.
\end{eqnarray*}
Hence,~$u^{\mathbf{n}}a\not\in H^M_k$, and the statement of the
theorem follows by setting~$B=\frac{1}{C}$.
\end{proof}

We now turn our attention to Property~II. For a mixing action
arising from a Noetherian module~$M$, an essential consequence
of the entropy rank one assumption is that for each~$\mathbf
n\in\mathbb Z^d$, the set of points fixed by~$\alpha^{\mathbf
n}$ is finite, and the cardinality of this set is equal
to~$\vert M/(\mathbf u^{\mathbf n}-1)M\vert$ by duality.

\begin{thm}\label{keep_it_going}
Let~$\mathfrak{p}\subset R_d$ be a prime ideal
with~$\kdim(R_d/\mathfrak{p})=1$, and suppose
that~$\theta(k)\nearrow\infty$ as~$k\rightarrow\infty$. If the
module~$M=R_d/\mathfrak{p}$ corresponds to a mixing action,
then there exists an increasing exhaustive
sequence~$H=\(H^M_k\)_{k\geqslant 1}$ of finite subsets of~$M$,
and constants~$\sigma, A, C_1, C_2>0$ such that Property~{\rm
II} is satisfied for
\[
\psi_M(k)=\max\left\{C_1, \frac{\sigma+1}{C_2}\log\left(\frac{\theta(k)}{(A^{\sigma}/2)^{1/(\sigma+1)}}\right)\right\}.
\]
\end{thm}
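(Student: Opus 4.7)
The plan is to adapt the strategy of Theorem~\ref{girl_youre_my_sunshine}, using the Einsiedler--Lind Lyapunov vectors to select, for each non-zero~$\mathbf n$, a place~$w\in S(\mathfrak{p})$ at which $\pi(\mathbf u^{\mathbf n})-1$ is exponentially large in~$\|\mathbf n\|$, and then to exhibit this~$w$ as a place where $(\mathbf u^{\mathbf n}-1)b$ is forced to escape the window defining~$H^M_k$. Compared with the proof of Theorem~\ref{girl_youre_my_sunshine}, two additional ingredients are required: a sharpening of the compactness argument that isolates a direction in which $\boldsymbol\ell_w\cdot\widehat{\mathbf n}$ is strictly positive (rather than merely large in modulus), and a Liouville-type estimate providing a uniform lower bound on $|b|_w$ for non-zero $b\in H^M_k$.

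The sharpened compactness step uses the product formula applied to $\pi(u_1),\dots,\pi(u_d)\in\mathbb{K}(\mathfrak{p})^\times$, which (after noting that $\boldsymbol\ell_v=0$ for $v\notin S(\mathfrak{p})$, since such $\pi(u_j)$ are units in the valuation ring) yields $\sum_{v\in S(\mathfrak{p})}m_v\boldsymbol\ell_v=0$ with strictly positive local weights~$m_v$. Were $\max_v\boldsymbol\ell_v\cdot\widehat{\mathbf n}\le 0$ for some unit vector~$\widehat{\mathbf n}$, every $\boldsymbol\ell_v\cdot\widehat{\mathbf n}$ would necessarily vanish, contradicting Lemma~\ref{dont_you_change_on_me}; continuity and compactness of~$\mathsf{S}_{d-1}$ then yield a constant $C_2>0$ such that every non-zero $\mathbf n\in\mathbb Z^d$ admits some $w\in S(\mathfrak{p})$ with $\boldsymbol\ell_w\cdot\widehat{\mathbf n}>C_2$. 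For $\|\mathbf n\|\ge C_1:=\log 2/C_2$ one has $|\pi(\mathbf u^{\mathbf n})|_w\ge 2$, and the reverse triangle inequality (archimedean case) or ultrametric equality (non-archimedean case) then gives $|\pi(\mathbf u^{\mathbf n})-1|_w\ge\tfrac12\exp(C_2\|\mathbf n\|)$. Combined with a Liouville-type bound $|b|_w\ge A^\sigma\theta(k)^{-\sigma}$ for non-zero $b\in H^M_k$, this produces
\[
|(\mathbf u^{\mathbf n}-1)b|_w\ge\tfrac12 A^\sigma\theta(k)^{-\sigma}\exp(C_2\|\mathbf n\|),
\]
and forcing the right-hand side to exceed $\theta(k)$ reproduces exactly the threshold $\psi_M(k)$ stated in the theorem.

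The main obstacle is the Liouville step. I would keep $H^M_k$ of the same shape as in the proof of Theorem~\ref{girl_youre_my_sunshine} and apply the product formula to $b\in\mathbb{K}(\mathfrak{p})^\times$, bounding $|b|_v\le\theta(k)$ at the remaining $v\in S(\mathfrak{p})$ and $|b|_v\le 1$ at non-archimedean places outside~$S(\mathfrak{p})$ (the latter by closure of~$R_d/\mathfrak{p}$ under multiplication together with boundedness of $|R_d/\mathfrak{p}|_v$ at such~$v$, which forces integrality). The delicate point is that the selected place~$w$ depends on the direction~$\widehat{\mathbf n}$, so $\sigma$ must be chosen as the maximum over $w\in S(\mathfrak{p})$ of the place-dependent exponents $\sum_{v\in S(\mathfrak{p})\setminus\{w\}}m_v/m_w$, and~$A$ as a matching uniform constant absorbing the finitely many residual normalisation factors, in order for the resulting Diophantine estimate to be valid in every direction simultaneously.
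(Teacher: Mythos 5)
Your proposal is correct, but it follows a genuinely different and in fact more elementary route than the paper's proof. The paper also begins by selecting a place $w$ with $\boldsymbol\ell_w\cdot\widehat{\mathbf n}>C$, but it then moves \emph{away} from $w$: assuming $(\mathbf u^{\mathbf n}-1)a\in H_k^M$, it deduces $|a|_w<2\theta(k)/|\pi(\mathbf u^{\mathbf n})|_w$, applies the product formula to locate a \emph{different} place $v\in S(\mathfrak p)\setminus\{w\}$ at which $|a|_v$ is not too small, and then invokes Baker's theorem (archimedean case) and Yu's theorem (non-archimedean case) to obtain the lower bound $|\pi(\mathbf u^{\mathbf n})-1|_v\ge A/\max_i\{n_i\}^B$ at that place, where $|\pi(\mathbf u^{\mathbf n})|_v$ might be close to $1$. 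You instead stay at the place $w$, where $|\pi(\mathbf u^{\mathbf n})|_w>\exp(C_2\|\mathbf n\|)$ is exponentially large, so the lower bound $|\pi(\mathbf u^{\mathbf n})-1|_w>\tfrac12\exp(C_2\|\mathbf n\|)$ follows from the reverse triangle inequality or the ultrametric identity alone; no Diophantine input is needed. Combined with a lower bound on $|b|_w$, this forces $|(\mathbf u^{\mathbf n}-1)b|_w>\theta(k)$, which already violates the defining condition of $H_k^M$. Your version of the positivity step is also cleaner: applying the product formula to $\pi(u_1),\dots,\pi(u_d)$ gives $\sum_{v\in S(\mathfrak p)}\boldsymbol\ell_v=0$ at the outset, whereas the paper reruns the product formula for each $\mathbf n$ on $\pi(\mathbf u^{\mathbf n})$, at the cost of replacing $C$ by $C/\sigma$.

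One small overcomplication on your side: the ``Liouville-type bound'' $|b|_w\ge A^\sigma\theta(k)^{-\sigma}$ is weaker than what is already available, since the paper's own $H_k^M$ (which you say you would keep) is defined by $\theta(k)^{-1}\le|b|_v\le\theta(k)$ at every $v\in S(\mathfrak p)$, giving $|b|_w\ge\theta(k)^{-1}$ directly without any product-formula argument. Using this gives the slightly sharper threshold $\|\mathbf n\|>\frac{1}{C_2}\log\!\left(2\theta(k)^2\right)$ and makes the local-degree weights $m_v$ and the maximum-over-$w$ exponent $\sigma$ irrelevant to the estimate (they only enter if you normalise absolute values differently; with the paper's normalisation, $\prod_v|b|_v=1$ has no weights). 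Either way your argument is sound, matches the required form $\psi_M(k)=O(\log\theta(k))$ for suitable choices of the free constants, and dispenses entirely with the linear-forms-in-logarithms machinery that the paper's proof relies on.
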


\begin{proof}
Just as in the proof of Theorem~\ref{girl_youre_my_sunshine},
there is a constant~$C>0$ such that given any
non-zero~$\mathbf{n}\in\mathbb{Z}^d$, there exists~$w\in
S(\mathfrak p)$ such that
\[
|\boldsymbol\ell_w\cdot\widehat{\mathbf{n}}|>C.
\]
Without loss of generality, we may always choose~$w$ such
that
\[
\boldsymbol\ell_w\cdot\widehat{\mathbf{n}}>C.
\]
For, given~$w\in S(\mathfrak p)$ such
that~$\boldsymbol\ell_w\cdot\widehat{\mathbf{n}}<-C$, we can
consider~$\prod_{v\in S(\mathfrak p)}|\pi(\mathbf
u^\mathbf{n})|_v$ as follows: Since~$\pi(\mathbf u^\mathbf{n})$
is a unit in~$M$, the product formula implies that
\[
\prod_{v\in S(\mathfrak p)\setminus\{w\}}|\pi(\mathbf u^\mathbf{n})|_v = |\pi(\mathbf u^\mathbf{n})|_w^{-1}.
\]
Hence, for some~$v\in S(\mathfrak p)\setminus\{w\}$ it follows
that
\[
|\pi(\mathbf u^\mathbf{n})|_v > |\pi(\mathbf u^\mathbf{n})|_w^{-1/\sigma},
\]
where~$\sigma=|S(\mathfrak p)|-1$. Therefore,
\[
\exp(\Vert\mathbf{n}\Vert\boldsymbol\ell_v\cdot\widehat{\mathbf{n}})>\exp(-\Vert\mathbf{n}\Vert\boldsymbol\ell_w\cdot\widehat{\mathbf{n}}/\sigma).
\]
Hence~$\boldsymbol\ell_v\cdot\widehat{\mathbf{n}}>C/\sigma$, and we simply need to replace~$C$ by~$C/\sigma$.

As before, set
\[
H^M_k=\{a\in M\mid\theta(k)^{-1}\leqslant|a|_v\leqslant\theta(k)\mbox{ for all }v\in S(\mathfrak{p})\}\cup\{0\},
\]
which again defines an increasing exhaustive sequence
since~$\theta(k)\nearrow\infty$ as~$k\to\infty$.
Fix~$\epsilon>0$ and let~$\mathbf{n}\in\mathbb{Z}^d$
satisfy~$\Vert\mathbf{n}\Vert>\psi_M(k)$, where in the
definition of~$\psi_M$,~$\sigma=|S(\mathfrak
p)|-1$,~$C_2=C-\epsilon$, and the constants~$A$ and~$C_1$ are
to be specified later. Let~$a\in H^M_k$ be non-zero and suppose
that~$(\mathbf u^{\mathbf{n}}-1)a\in H^M_k$. This implies
that~$|\pi(\mathbf u^{\mathbf{n}}-1)a|_w<\theta(k)$, so
\begin{equation}\label{make_it_all_worthwhile}
|a|_w
<\frac{\theta(k)}{|\pi(\mathbf u^{\mathbf{n}})-1|_w}
<\frac{2\theta(k)}{|\pi(\mathbf u^{\mathbf{n}})|_w}.
\end{equation}
By the product
formula~$\prod_{v\in\mathcal{P}(\mathbb{K}(\mathfrak{p}))}|a|_v=1$,
so
\[
\prod_{v\in S(\mathfrak p)\setminus\{w\}}|a|_v=|a|_w^{-1}\prod_{v\in\mathcal{P}(\mathbb{K}(\mathfrak{p}))
\setminus S(\mathfrak p)}|a|_v^{-1}\geqslant|a|_w^{-1},
\]
as~$|a|_v\leqslant 1$ for
all~$v\in\mathcal{P}(\mathbb{K}(\mathfrak{p}))\setminus
S(\mathfrak p)$. Thus at least one~$v\in S(\mathfrak
p)\setminus\{w\}$ satisfies
\begin{equation}\label{make_all_the_pain_stop}
|a|_v\geqslant|a|_w^{-1/\sigma}>\left(\frac{|\pi(\mathbf u^{\mathbf{n}})|_w}{2\theta(k)}\right)^{1/\sigma},
\end{equation}
by~\eqref{make_it_all_worthwhile}.
If~$v$ is archimedean, then by Baker's Theorem~\cite{MR0258756}, there exist constants~$A,B>0$ such that
\[
|\pi(\mathbf u^{\mathbf{n}})-1|_v\geqslant\frac{A}{\max_{1\leqslant i\leqslant d}\{n_i\}^B}.
\]
If~$v$ is non-archimedean, a similar bound holds by Yu's
Theorem~\cite{MR1055245}. In both the archimedean and
non-archimedean cases, given the ideal~$\mathfrak p$, the
constants arising can (in principle) be computed explicitly.
Combining these bounds with~\eqref{make_all_the_pain_stop}
gives
\begin{eqnarray}
\nonumber
|\pi(\mathbf u^{\mathbf{n}}-1)a|_v
=
|\pi(\mathbf u^{\mathbf{n}}-1)|_v|a|_v\negmedspace\negmedspace\negmedspace
& \geqslant &\negmedspace\negmedspace\negmedspace
\frac{A|\pi(\mathbf u^{\mathbf{n}})|_w^{1/\sigma}}{\max_{1\leqslant i\leqslant d}\{n_i\}^B(2\theta(k))^{1/\sigma}}\\
\nonumber\negmedspace\negmedspace\negmedspace
& = &\negmedspace\negmedspace\negmedspace
\frac{A\exp(\Vert\mathbf{n}\Vert\ell_w\cdot\widehat{\mathbf{n}}/\sigma)}{\max_{1\leqslant i\leqslant d}\{n_i\}^B(2\theta(k))^{1/\sigma}}\\
\nonumber\negmedspace\negmedspace\negmedspace
& \geqslant &\negmedspace\negmedspace\negmedspace
\frac{A\exp(\Vert\mathbf{n}\Vert C/\sigma)}{\max_{1\leqslant i\leqslant d}\{n_i\}^B(2\theta(k))^{1/\sigma}}\\
\negmedspace\negmedspace\negmedspace& \geqslant &\negmedspace\negmedspace\negmedspace
\frac{A}{(2\theta(k))^{1/\sigma}}\exp\(\frac{(C-\epsilon)\Vert\mathbf{n}\Vert}{\sigma}\),\label{just_like_a_river}
\end{eqnarray}
provided that~$\Vert\mathbf{n}\Vert$ is large enough
to ensure that
\[
\max_{1\leqslant i\leqslant d}\{n_i\}^B\leqslant\exp(\Vert\mathbf{n}\Vert\epsilon/\sigma).
\]
We may ensure this by a suitable choice of~$C_1=C_1(\epsilon)$ in the definition
of~$\psi_M(k)$, since~$\Vert\mathbf{n}\Vert>\psi_M(k)$.
Furthermore, since~$\Vert\mathbf{n}\Vert>\psi_M(k)$, the right-hand side
of~\eqref{just_like_a_river} is strictly greater than
\[
\frac{A}{(2\theta(k))^{1/\sigma}}
\left(\frac{\theta(k)}{(A^\sigma/2)^{1/(\sigma+1)}}\right)^{1+1/\sigma}
=
\theta(k),
\]
so~$(\mathbf u^{\mathbf{n}}-1)a\not\in H^M_k$, disagreeing with our
contrary assumption which therefore must have been false.
\end{proof}

Theorems~\ref{girl_youre_my_sunshine} and~\ref{keep_it_going}
describe (in an opaque form) uniformity in rate of mixing and
in the distribution of periodic points respectively for cyclic
systems -- those corresponding to cyclic~$R_d$-modules. As
usual, we need arguments from commutative algebra to build up
to a more general picture. The next lemma allows the two
properties to be inherited by a suitable extension of one
action by another.

\begin{lem}\label{i_love_you_just_the_way_you_are}
Let~$L,M$ be~$R_d$-modules with~$L\subset M$, and suppose
that both~$L$ and~$M/L$ are mixing.
\begin{enumerate}
\item If Property~{\rm I} holds for~$L$ and for~$M/L$, then
    there is an appropriate increasing exhaustive
    sequence~$\(H^M_k\)_{k\geqslant 1}$ and function
\[
\phi_M(k)=\max\{\phi_L(k),\phi_{M/L}(k)\},
\]
such that it also holds for~$M$.
\item If Property~{\rm II} holds for~$L$ and for~$M/L$, then there is an
appropriate increasing exhaustive sequence~$\(H^M_k\)_{k\geqslant 1}$ and function
\[
\psi_M(k)=\max\{\psi_L(k),\psi_{M/L}(k)\},
\]
such that it also holds for~$M$.
\end{enumerate}
\end{lem}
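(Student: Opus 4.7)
The plan is to build an exhaustive sequence for $M$ by combining the given ones on $L$ and $M/L$ through a set-theoretic section of the quotient, and then read both properties off the short exact sequence $0 \to L \to M \to M/L \to 0$. Fix a set-theoretic section $s : M/L \to M$ of $\pi : M \to M/L$ with $s(0) = 0$, so that every $a \in M$ has a unique decomposition $a = s(\pi(a)) + \ell$ with $\ell = a - s(\pi(a)) \in L$. Define
\[
H^M_k = \{ s(\bar m) + \ell : \bar m \in H^{M/L}_k,\ \ell \in H^L_k \}.
\]
Since $s(0) = 0$, this contains both $H^L_k$ and $s(H^{M/L}_k)$, and it is clearly finite, increasing, and exhausts $M$. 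Moreover, by uniqueness of the decomposition, any $a \in H^M_k$ satisfies $\pi(a) \in H^{M/L}_k$ and $a - s(\pi(a)) \in H^L_k$.

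For part (i), suppose $a \in H^M_k$ is nonzero, $\mathbf u^{\mathbf n} a \in H^M_k$, and $\Vert \mathbf n \Vert > \phi_M(k) = \max\{\phi_L(k),\phi_{M/L}(k)\}$. Since $\mathbf u^{\mathbf n}$ commutes with $\pi$, applying $\pi$ shows that both $\pi(a)$ and $\mathbf u^{\mathbf n} \pi(a)$ lie in $H^{M/L}_k$, so Property~I for $M/L$ forces $\pi(a) = 0$, and hence $a \in L$. The normalization $s(0) = 0$ then yields the decomposition $a = 0 + a$, giving $a \in H^L_k$; similarly $\mathbf u^{\mathbf n} a \in L \cap H^M_k$ forces $\mathbf u^{\mathbf n} a \in H^L_k$. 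Property~I for $L$ now yields $a = 0$, contradicting nonzeroness. Part (ii) is structurally identical after replacing $\mathbf u^{\mathbf n}$ by $\mathbf u^{\mathbf n} - 1$, which also commutes with $\pi$ and preserves $L$, so the projection-then-restriction argument closes on itself.

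Since the argument is purely a diagram chase through the short exact sequence, no Diophantine or analytic input is needed, and the rate functions combine simply as the maximum. The only real subtlety is the bookkeeping around the section: the normalization $s(0) = 0$ is essential to the implication $a \in L \cap H^M_k \Rightarrow a \in H^L_k$, which is what closes the descent from $M$ first to $M/L$ and then to $L$; without it, the $L$-component of an element of $H^M_k$ that happens to lie in $L$ need not itself belong to $H^L_k$, and the induction step would fail.
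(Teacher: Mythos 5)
Your proof is correct and takes essentially the same route as the paper: the paper constructs $H^M_k$ as $\bigcup_{x\in K\cap\pi^{-1}(H_k^{M/L})}x+H_k^L$ for a set $K$ of coset representatives containing $0$, which is exactly your section $s$ with $s(0)=0$, and both arguments then project along $\pi$ and descend through the short exact sequence. The one small elision worth flagging is the step where you conclude $\pi(a)=0$: Property~I (respectively~II) only gives $\mathbf u^{\mathbf n}\pi(a)=0$ (respectively $(\mathbf u^{\mathbf n}-1)\pi(a)=0$), so you also need injectivity of multiplication by $\mathbf u^{\mathbf n}$ (respectively $\mathbf u^{\mathbf n}-1$) on $M/L$, which follows from the mixing hypothesis and which the paper invokes explicitly.
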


\begin{proof}
For each~$k\in\mathbb{N}$, let
\[
H_k^M=\bigcup_{x\in K\cap\pi^{-1}(H_k^{M/L})}x+H_k^L,
\]
where~$\pi:M\rightarrow M/L$ is the natural
quotient map of~$R_d$-modules, and~$K$ is a set of coset
representatives containing~$0$. By construction, each~$H_M^k$ is finite
and~$\bigcup_{k=1}^{\infty}H^M_k=M$.

\noindent(i) Suppose that Property~I is violated for
some~$\mathbf{n}$ with~$\Vert\mathbf{n}\Vert>\phi_M(k)$. Then
there exist~$w,x\in K\cap\pi^{-1}(H_k^{M/L})$ and~$g,h\in
H_k^L$ such that
\begin{equation}\label{keep_love_flowing}
\mathbf u^{\mathbf{n}}(x+h)=w+g\neq 0.
\end{equation}
In particular,~$u^{\mathbf{n}}\pi(x)=\pi(w)$,
which means that~$\pi(w)=0$ since Property~I holds for~$M/L$
by hypothesis. Therefore,~$w=0$ by our choice of~$K$, and~$\pi(x)=0$
as multiplication by~$\mathbf u^{\mathbf{n}}$ is an automorphism of~$M/L$.
It follows that~$x=0$ by our choice of~$K$ and so~\eqref{keep_love_flowing}
implies that~$\mathbf u^{\mathbf{n}}h=g$ meaning that~$g=0$,
as Property~I holds for~$L$. So,~$w+g=0$, contradicting~\eqref{keep_love_flowing}.

\noindent(ii) Suppose that Property~II is violated for
some~$\mathbf{n}$ with~$\Vert\mathbf{n}\Vert>\psi_M(k)$. Then
there exist~$x\in K$,~$w\in K\cap\pi^{-1}(H_k^{M/L})$,~$g\in
H_k^L$ and~$h\in L$ such that
\begin{equation}\label{dont_let_our_world_stop}
(\mathbf u^\mathbf{n}-1)(x+h)=w+g\neq 0.
\end{equation}
Thus~$(\mathbf u^{\mathbf{n}}-1)\pi(x)=\pi(w)$, which means
that~$\pi(w)=0$ since Property~II holds for~$M/L$. This
forces~$\pi(x)=0$ as multiplication by~$(\mathbf
u^{\mathbf{n}}-1)$ is injective on~$M/L$ (since~$M/L$
corresponds to a mixing system). Therefore~$x=0$ by our choice
of~$K$, and so~\eqref{dont_let_our_world_stop} implies
that~$(\mathbf u^\mathbf{n}-1)h=g$ meaning~$g=0$, as
Property~II holds for~$L$. So~$w+g=0$,
contradicting~\eqref{dont_let_our_world_stop}.
\end{proof}

The next lemma shows that both properties are inherited when
passing to factors of systems (factors of algebraic~$\mathbb Z^d$-actions
correspond to submodules under duality).

\begin{lem}\label{girl_youre_my_lucky_star}
Let~$L,M$ be~$R_d$-modules with~$L\subset M$ and suppose that~$M$ is mixing.
\begin{enumerate}
\item If Property~{\rm I} holds for~$M$ then there is an appropriate increasing sequence~$\(H^L_k\)_{k\geqslant 1}$ and function~$\phi_L(k)=\phi_M(k)$ such that it also holds for~$L$.
\item If Property~{\rm II} holds for~$M$ then there is an appropriate increasing exhaustive sequence~$\(H^L_k\)_{k\geqslant 1}$ and function~$\psi_L(k)=\psi_M(k)$ such that it also holds for~$L$.
\end{enumerate}
\end{lem}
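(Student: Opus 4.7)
The proof proposal here is almost trivial in structure: both properties should transfer from $M$ down to the submodule $L$ just by restriction of the exhaustive sequence, and no new Diophantine or algebraic input is required. The plan is the following.

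First, I would define the exhaustive sequence for $L$ by setting $H^L_k = H^M_k\cap L$ for each $k\geqslant 1$. Since $H^M_k$ is finite, so is $H^L_k$; since the sequence $(H^M_k)$ is increasing, so is $(H^L_k)$; and since $\bigcup_k H^M_k=M\supset L$, every element of $L$ lies in some $H^L_k$, so the sequence is exhaustive in $L$.

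For part~(i), suppose $\mathbf n\in\mathbb Z^d$ satisfies $\Vert\mathbf n\Vert>\phi_M(k)$ and let $b\in H^L_k\cap\mathbf u^{\mathbf n}H^L_k$. Writing $b=\mathbf u^{\mathbf n}a$ with $a\in H^L_k$, both $a$ and $b$ lie in $H^M_k$, so
\[
b\in H^M_k\cap\mathbf u^{\mathbf n}H^M_k=\{0\}
\]
by Property~I for $M$. Hence $b=0$, which is Property~I for $L$ with $\phi_L=\phi_M$. Part~(ii) is identical with $\mathbf u^{\mathbf n}$ replaced by $\mathbf u^{\mathbf n}-1$: any element of $H^L_k\cap(\mathbf u^{\mathbf n}-1)H^L_k$ lies in $H^M_k\cap(\mathbf u^{\mathbf n}-1)H^M_k=\{0\}$ whenever $\Vert\mathbf n\Vert>\psi_M(k)=\psi_L(k)$.

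The only thing worth remarking on (rather than a real obstacle) is that the argument implicitly uses that multiplication by $\mathbf u^{\mathbf n}$, respectively by $\mathbf u^{\mathbf n}-1$, is well defined on the submodule $L$; but these are $R_d$-module maps and $L$ is an $R_d$-submodule, so this is automatic. The mixing hypothesis on $M$ is inherited by $L$ (since injectivity of multiplication by $\mathbf u^{k\mathbf n}$ passes to any submodule), so the statement of Property~II for $L$ is meaningful. No Diophantine input, no product formula, and no choice of coset representatives as in Lemma~\ref{i_love_you_just_the_way_you_are} are needed; restriction of the witnessing sets does the entire job.
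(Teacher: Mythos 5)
Your proof is correct and takes essentially the same approach as the paper: define $H^L_k = H^M_k\cap L$ and deduce both properties by containment in $H^M_k\cap\mathbf u^{\mathbf n}H^M_k$ (respectively $H^M_k\cap(\mathbf u^{\mathbf n}-1)H^M_k$). The only cosmetic difference is that the paper frames the deduction as a set equality requiring injectivity of multiplication by $\mathbf u^{\mathbf n}$ (resp.\ $\mathbf u^{\mathbf n}-1$), whereas you use only the one-sided inclusion and so dispense with that remark.
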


\begin{proof}
For each~$k\in\mathbb{N}$
let~$H_k^L=H_k^M\cap L$. Then~$\bigcup_{k=1}^{\infty}H^L_k=L$ and each~$H_k^L$ is finite.

\noindent(i) For~$\mathbf{n}\in\mathbb{Z}^d$
with~$\Vert\mathbf{n}\Vert>\phi_L(k)$,
\begin{eqnarray*}
H_k^L \cap\mathbf u^{\mathbf{n}}H_k^L
& = & H_k^M\cap L  \cap\mathbf u^{\mathbf{n}}(H_k^M\cap L)\\
& = & H_k^M\cap L  \cap\mathbf u^{\mathbf{n}}H_k^M
\cap\mathbf u^{\mathbf{n}}L,
\end{eqnarray*}
since multiplication by~$u^{\mathbf{n}}$ is injective.
Furthermore, the right-hand side is~$\{0\}$, as Property~I holds for~$M$.

\noindent(ii) Similarly, for~$\mathbf{n}\in\mathbb{Z}^d$ with~$\Vert\mathbf{n}\Vert>\psi_L(k)$,
\begin{eqnarray*}
H_k^L \cap (\mathbf u^{\mathbf{n}}-1)H_k^L
& = & H_k^M\cap L  \cap (\mathbf u^{\mathbf{n}}-1)(H_k^M\cap L)\\
& = & H_k^M\cap L  \cap (\mathbf u^{\mathbf{n}}-1)H_k^M \cap (\mathbf u^{\mathbf{n}}-1)L,
\end{eqnarray*}
as multiplication by~$\mathbf u^{\mathbf{n}}-1$ is injective
by the mixing assumption. Furthermore, the right-hand side is~$\{0\}$,
as Property~II holds for~$M$.
\end{proof}

We are now ready to pass the two uniformity properties up
from cyclic modules to Noetherian modules.

\begin{thm}\label{i_was_living_in_darkness}
Let~$M$ be a Noetherian~$R_d$-module corresponding to a mixing
algebraic $\mathbb{Z}^d$-action of entropy rank one, and
suppose~$\theta(k)\nearrow\infty$ as~$k\rightarrow\infty$. Then
there is an increasing exhaustive
sequence~$\(H^M_k\)_{k\geqslant 1}$ of finite subsets of~$M$,
and there are constants~$B,C>0$, such that Property~{\rm I} is
satisfied for
\[
\phi_M(k)=B\log \theta(k),
\]
and Property~{\rm II} is satisfied for
\[
\psi_M(k)=C\log \theta(k).
\]
\end{thm}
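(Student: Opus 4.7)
The plan is to reduce Theorem~\ref{i_was_living_in_darkness} to the cyclic case treated in Theorems~\ref{girl_youre_my_sunshine} and~\ref{keep_it_going} by means of a prime filtration of~$M$, and then to transport the uniform estimates up this filtration using Lemma~\ref{i_love_you_just_the_way_you_are}.

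Since~$R_d$ is Noetherian and~$M$ is finitely generated, the standard prime filtration lemma from commutative algebra produces a chain
\[
0 = M_0 \subset M_1 \subset \cdots \subset M_s = M
\]
with each quotient~$M_i/M_{i-1}$ isomorphic to~$R_d/\mathfrak{p}_i$ for some prime~$\mathfrak{p}_i$ of~$R_d$. By the entropy rank one hypothesis, recalled in Section~2, every~$\mathfrak{p} \in \ass(M)$ satisfies~$\kdim(R_d/\mathfrak{p})=1$, and mixing of~$M$ forces~$R_d/\mathfrak{p}$ itself to correspond to a mixing action for each such~$\mathfrak{p}$. Following standard commutative-algebraic arguments along the lines of~\cite{MR1248915}, one arranges the filtration so that every~$\mathfrak{p}_i$ inherits both of these properties; the subtle point is to rule out (or work around) primes in~$\supp(M)\setminus\ass(M)$, which would give rise to zero-dimensional cyclic quotients with non-mixing dynamics, breaking the extension step below. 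This is the main technical hurdle.

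Once such a filtration is in place, apply Theorem~\ref{girl_youre_my_sunshine} to each cyclic factor~$R_d/\mathfrak{p}_i$ to obtain an exhaustive increasing sequence of finite subsets of~$R_d/\mathfrak{p}_i$ for which Property~I holds with rate function~$B_i\log\theta(k)$, and apply Theorem~\ref{keep_it_going} to the same factor to obtain an exhaustive sequence for which Property~II holds with rate dominated by~$C_i\log\theta(k)$ for all~$k$ (the additive and offset constants in Theorem~\ref{keep_it_going} being absorbed into~$C_i$ by enlarging it if necessary). Passing to a common refinement, we may assume that a single exhaustive sequence witnesses both properties for each~$R_d/\mathfrak{p}_i$.

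Finally, induct on~$i$. At stage~$i$, both~$M_{i-1}$ (as a submodule of the mixing module~$M$, on which multiplication by~$\mathbf u^{k\mathbf n}$ remains injective) and~$M_i/M_{i-1} \cong R_d/\mathfrak{p}_i$ (by the choice of filtration) are mixing, so Lemma~\ref{i_love_you_just_the_way_you_are} applies and yields an exhaustive sequence for~$M_i$ satisfying Properties~I and~II with rate functions equal to the pointwise maxima of those for~$M_{i-1}$ and for~$R_d/\mathfrak{p}_i$. Setting~$B = \max_{1\leqslant i\leqslant s}B_i$ and~$C = \max_{1\leqslant i\leqslant s}C_i$ after~$s$ steps produces an exhaustive sequence for~$M=M_s$ satisfying Properties~I and~II with rate functions~$B\log\theta(k)$ and~$C\log\theta(k)$ respectively, as required. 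Once the first step is accomplished the inductive machinery is routine, so the entire weight of the proof rests on the careful choice of the prime filtration.
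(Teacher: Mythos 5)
You correctly identify the obstacle but then assert it away: a prime filtration of $M$ itself will in general involve quotients $R_d/\mathfrak{q}$ for primes $\mathfrak{q}\in\supp(M)\setminus\ass(M)$, and in the entropy rank one setting these can have $\kdim(R_d/\mathfrak{q})=0$, giving non-mixing cyclic factors to which Theorems~\ref{girl_youre_my_sunshine} and~\ref{keep_it_going} do not apply. Saying ``one arranges the filtration so that every $\mathfrak{p}_i$ inherits both of these properties'' is not a construction, and no such rearrangement of a filtration of $M$ is available in general. This is precisely the step the paper does differently, and it is not filled in by your argument.

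The paper's route avoids filtering $M$ at all. Instead, by~\cite[Cor.~6.3]{MR1345152} (or~\cite[Sect.~4]{MR1248915}), $M$ \emph{embeds} into a direct sum $M'=\bigoplus_{i=1}^r M(i)$, where $\ass(M)=\{\mathfrak{p}_1,\dots,\mathfrak{p}_r\}$ and each $M(i)$ admits a prime filtration whose successive quotients are \emph{all} isomorphic to $R_d/\mathfrak{p}_i$ for the single associated prime $\mathfrak{p}_i$; each such module is mixing by~\cite[Th.~6.5]{MR1345152}. One then runs your induction via Lemma~\ref{i_love_you_just_the_way_you_are} up each filtration of $M(i)$, combines the summands by taking a product exhaustion and $B=\max_i B_i$, and finally uses Lemma~\ref{girl_youre_my_lucky_star} to push Properties~I and~II down from the overmodule $M'$ to the submodule $M$, with the same rate function. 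Your proposal never invokes Lemma~\ref{girl_youre_my_lucky_star}, which is the tell-tale sign that the descent step is missing: the whole point of that lemma is to let you prove the result for a well-behaved module containing $M$ rather than for $M$ directly. Two smaller points: the exhaustive sequences produced by Theorems~\ref{girl_youre_my_sunshine} and~\ref{keep_it_going} are literally the same sets $H_k^M$, so no ``common refinement'' is needed; and your remark about absorbing the constants of Theorem~\ref{keep_it_going} into a single $C_i$ is fine and matches what the paper does.
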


\begin{proof}
Write~$\ass(M)=\{\mathfrak{p}_1,\dots,\mathfrak{p}_r\}$ and note that $\kdim(R_d/\mathfrak{p}_i)=1$ for each~$1\leqslant i\leqslant r$ by the mixing and entropy rank one assumptions.
By~\cite[Cor.~6.3]{MR1345152} or~\cite[Sect.~4]{MR1248915},~$M$ embeds in a module of the
form~$M'=\bigoplus_{i=1}^r M(i)$, where each~$R_d$-module~$M(i)$ for~$1\leqslant i\leqslant r$, has a prime filtration of the form
\begin{equation}\label{dont_change_on_me}
\{0\}=N_0^{(i)}\subset N_1^{(i)}\subset\dots\subset N_{s(i)}^{(i)}=M(i),
\end{equation}
with~$N_j^{(i)}/N_{j-1}^{(i)}\cong R_d/\mathfrak{p}_i$
for all~$1\leqslant j\leqslant s(i)$.
Each of these modules is mixing by~\cite[Th.~6.5]{MR1345152}.

We first consider Property~{\rm I}. For each module~$M(i)$,
using Theorem~\ref{girl_youre_my_sunshine},
Lemma~\ref{i_love_you_just_the_way_you_are},
and induction on the prime filtration~\eqref{dont_change_on_me}, we may find an
increasing exhaustive
sequence~$\(H^{M(i)}_k\)_{k\geqslant 1}$ of finite subsets of~$M(i)$
such that Property~I is satisfied for
\[
\phi_{M(i)}(k)=B_i\log \theta(k),
\]
where~$B_i>0$ is the constant appearing in Theorem~\ref{girl_youre_my_sunshine}
(which follows from the proof of Lemma~\ref{i_love_you_just_the_way_you_are}).
For each~$k\in\mathbb{N}$, set
\[
H_k^{M'}=\bigoplus_{i=1}^r H^{M(i)}_k
\]
and
\[
\phi_{M'}(k)=B\log\theta(k),
\]
where~$B=\max_{1\leqslant i\leqslant r}\{B_i\}$.
Therefore, Property~I holds for~$M'$
and the required result follows
by applying Lemma~\ref{girl_youre_my_lucky_star}.

Property~{\rm II} is obtained in an analogous way, noting
that~$\psi_M(k)$ can be replaced by~$\psi_M(k)=C\log\theta(k)$
for a suitably large choice of~$C$ in
Theorem~\ref{keep_it_going}.
\end{proof}

\section{Remarks}
(1) Theorem~\ref{i_was_living_in_darkness} gives
Theorem~\ref{main} simply by translation: the class of
functions~$\mathcal{C}(X)$ is defined by choosing a rate of
decay for the size of coefficients in the Fourier expansion
outside~$H_k^M$ so rapid that the sum over~$M\setminus H_k$
is~$o(1)$ in~$k$, choosing~$\theta$, and then
computing~$\phi_M'$ and~$\psi_M'$.

(2) Throughout, the function~$\theta$ can be chosen
arbitrarily. We have left it in the statements to facilitate
future more explicit estimates for specific classes of compact
groups.

(3) It seems possible that the uniformity in mixing could be
    present in higher entropy ranks. We initially attempted to
    prove this using adelic amoebas~\cite{MR2289207} in place
    of Lyapunov vectors. For a mixing action of higher entropy
    rank, an unpublished argument due to Einsiedler enables one
    to see that the adelic amoeba spans~$\mathbb{R}^d$, just as
    the set of Lyapunov vectors does for an entropy rank one
    action. However, finding a suitable exhaustive sequence in
    the dual module based on this appears to be rather
    problematic. Notably, however, one only needs to consider
    an action corresponding to a cyclic module as the method of
    passing up to Noetherian modules used here (Lemmas~\ref{i_love_you_just_the_way_you_are}
    and~\ref{girl_youre_my_lucky_star}) works for all
    entropy ranks.

%%\bibliographystyle{plain}
%%\bibliography{references}
%% manual insert of bbl file for upload to arxiv

\end{document}